\newcommand{\N}{\mathbb N}
\newcommand{\R}{\mathbb R}
\newcommand{\Z}{\mathbb Z}
\newcommand{\rar}{\rightarrow}
\newtheorem{theorem}{Theorem}[section]
\theoremstyle{definition}
\theoremstyle{remark}
\numberwithin{equation}{section}
\begin{document}

\title[A randomized version of the Littlewood Conjecture]{A randomized version of the\\Littlewood Conjecture}
\author{Alan Haynes, Henna Koivusalo}

\thanks{Research supported by EPSRC grants EP/L001462, EP/J00149X, EP/M023540.\\ \phantom{A..}HK also gratefully acknowledges the support of Osk. Huttunen foundation. \\ \phantom{A..}MSC 2010: 11J13, 60D05}
\keywords{Littlewood Conjecture, covering problems}

\begin{abstract}
The Littlewood Conjecture in Diophantine approximation can be thought of as a problem about covering $\R^2$ by a union of hyperbolas centered at rational points. In this paper we consider the problem of translating the center of each hyperbola by a random amount which depends on the denominator of the corresponding rational. Using a randomized covering argument we prove that, not only is this randomized version of the Littlewood Conjecture true for almost all choices of centers, an even stronger statement with an extra factor of a logarithm also holds.
\end{abstract}
\maketitle

\section{Introduction}
The Littlewood Conjecture in Diophantine approximation asserts that, for every $\alpha,\beta\in\R$,
\[\liminf_{n\rar\infty}n\|n\alpha\|\|n\beta\|=0,\]
where $\|\cdot\|$ denotes distance to the nearest integer. It is easy to prove that the corresponding one variable statement, without the $\|n\beta\|$ factor, is false. In particular, while it is true that for all $\alpha$,
\[\liminf_{n\rar\infty}n\|n\alpha\|\le 1/2,\]
a well known result of Jarnik \cite{Jarn1928} states that the set of $\alpha\in\R$ for which
\[\liminf_{n\rar\infty}n\|n\alpha\|>0\]
has Hausdorff dimension equal to $1$. On the other hand, Khintchine's Theorem \cite{Khin1926} tells us that for almost every $\alpha\in\R$ we have that
\[\liminf_{n\rar\infty}n(\log n)\|n\alpha\|=0.\]
This shows that there is a `logarithmic gap' between what is true almost everywhere and what is true for all real numbers.\vspace*{.1in}

There is currently some discussion in the Diophantine approximation community about whether or not, in relation to the Littlewood Conjecture, such a logarithmic gap may also exist. This is precisely the content of Conjecture [L1] in \cite{BadzVela2011}, where arguments are presented in favor of this possibility. The metric analogue of Khintchine's Theorem in this setting is a result due to Gallagher \cite{Gall1962}, which implies that for almost all $(\alpha,\beta)\in\R^2$,
\begin{equation}\label{eqn.GallResult}
\liminf_{n\rar\infty}n(\log n)^2\|n\alpha\|\|n\beta\|=0.
\end{equation}
In fact a stronger statement has recently been established in \cite{BereHaynVela2016}, which tells us that for all $\alpha\in\R$, and for almost all $\beta\in\R$ (with the set of $\beta$ depending on $\alpha$), equation \eqref{eqn.GallResult} holds. The question then, is whether or not it could actually be true that for every $\alpha,\beta\in\R$,
\begin{equation}\label{eqn.StrongLittConj}
\liminf_{n\rar\infty} n(\log n)\|n\alpha\|\|n\beta\|<\infty.
\end{equation}
Although skepticism has occasionally been raised even of the original Littlewood Conjecture, there are no $\alpha$ and $\beta$ which are known to not satisfy \eqref{eqn.StrongLittConj}. The stronger statement is in fact consistent with what is known to be true for cubic irrationals from the same cubic number field \cite{CassSwin1955,Peck1961}. It is also consistent with known results for quadratic irrationals (and real numbers with quasi-periodic continued fraction expansions) in the $p$-adic versions of the Littlewood Conjecture \cite{BugeDrmodeMa2007,HaynMund2015,deMaTeul2004} (see \cite{BugeHaynVela2011} for the corresponding metric statements in this case).\vspace*{.1in}

The purpose of this note is to explain how a covering argument used by Dvoretzky in \cite{Dvor1956} can be used to prove the following randomized version of assertion \eqref{eqn.StrongLittConj}.
\begin{theorem}\label{thm.RandLitt}
  Let $(\gamma_n)_{n\in\N}$ and $(\delta_n)_{n\in\N}$ be sequences of independent random variables taking values which are uniformly distributed (with respect to Lebesgue measure) in $[0,1)$. Then almost surely we have that, for all $\alpha,\beta\in\R$,
  \[\liminf_{n\rar\infty}n(\log n)\|n\alpha-\gamma_n\|\|n\beta-\delta_n\|\le 1.\]
\end{theorem}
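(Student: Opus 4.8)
The plan is to recast the assertion as a random covering problem for the torus and then run a two-dimensional, anisotropic version of the covering argument of Dvoretzky \cite{Dvor1956}. Since $\|n\alpha-\gamma_n\|$ and $\|n\beta-\delta_n\|$ are $1$-periodic in $\alpha$ and in $\beta$, it suffices to prove the conclusion for $(\alpha,\beta)\in[0,1)^2$. For $n\ge 3$ put $\epsilon_n=1/(n\log n)$ and define the open random sets
\[
E_n=\bigl\{(\alpha,\beta)\in[0,1)^2:\ n(\log n)\,\|n\alpha-\gamma_n\|\,\|n\beta-\delta_n\|<1\bigr\}.
\]
If, almost surely, every $(\alpha,\beta)$ lies in $E_n$ for infinitely many $n$, then almost surely the $\liminf$ in the statement is at most $1$; and this occurs precisely when, almost surely, $\bigcup_{n\ge N}E_n=[0,1)^2$ for every $N\in\N$. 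As the latter is a countable intersection of events, it is enough to fix $N$ and show that the closed set $U_N:=[0,1)^2\setminus\bigcup_{n\ge N}E_n$ is empty almost surely. Writing $\pi_n\colon\R^2/\Z^2\to\R^2/\Z^2$ for the $n^2$-to-$1$, Lebesgue-measure-preserving map $(\alpha,\beta)\mapsto(n\alpha,n\beta)$, we have $E_n=\pi_n^{-1}(H_n)$, where $H_n$ is the uniformly random translate by $(\gamma_n,\delta_n)$ of the fixed hyperbolic cross $C_n=\{(x,y):\|x\|\,\|y\|<\epsilon_n\}$.

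\emph{The uncovered set is null.} A direct computation gives $|H_n|=|C_n|=4\epsilon_n\bigl(1+\log\tfrac{1}{4\epsilon_n}\bigr)=\tfrac4n\bigl(1+o(1)\bigr)$, so $\sum_n|E_n|=\infty$. For a fixed $x$ the events $\{x\in E_n\}_{n\ge N}$ are independent, each of probability $|H_n|$, whence $\PP\bigl(x\notin\bigcup_{n\ge N}E_n\bigr)=\prod_{n\ge N}(1-|H_n|)=0$; by Fubini $\EE|U_N|=0$, so $U_N$ is Lebesgue-null almost surely. The real work is to improve this to $U_N=\emptyset$ almost surely.

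\emph{From null to empty.} Here I would follow Dvoretzky's idea and cover each small cell by the \emph{large} among the random sets. Partition $[0,1)^2$ into $\rho^{-2}$ squares of side $\rho$; if $U_N\ne\emptyset$ then some square $Q$ is not contained in $\bigcup_{n\ge N}E_n$, so $\PP(U_N\ne\emptyset)\le\rho^{-2}\max_Q\PP\bigl(Q\not\subseteq\bigcup_{n\ge N}E_n\bigr)$. The geometric point is that $C_n$ contains, through its centre, a rectangle of sides $a\times b$ for every shape with $ab\asymp\epsilon_n$ and $\epsilon_n\lesssim a,b\lesssim 1$; hence $E_n$ contains the corresponding grid of $n^2$ rectangles of sides $\tfrac an\times\tfrac bn$. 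In particular, as long as $n$ lies below a threshold $n_\rho$ of order $\rho^{-2/3}$, the set $E_n$ contains a rectangle large enough to engulf an entire square of side $\rho$; and because $\rho$ is then small relative to the thinnest features of $E_n$, one can show $\PP(Q\subseteq E_n)=(1+o(1))|H_n|\asymp 1/n$ for $n\lesssim\rho^{-1/2}$ and that it stays a positive multiple of $1/n$ up to $n_\rho$. The events $\{Q\subseteq E_n\}$ are independent across $n$, so
\[
\PP\Bigl(Q\not\subseteq\bigcup_{n\ge N}E_n\Bigr)\ \le\ \prod_{N\le n\le n_\rho}\bigl(1-\PP(Q\subseteq E_n)\bigr)\ \le\ \exp\Bigl(-\sum_{N\le n\le n_\rho}\PP(Q\subseteq E_n)\Bigr),
\]
and the choice $\epsilon_n=1/(n\log n)$ is what makes the exponent exceed $2\log(1/\rho)$, so that $\rho^{-2}\PP(Q\not\subseteq\bigcup_{n\ge N}E_n)\to 0$ as $\rho\to 0$. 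Letting $\rho\to 0$ gives $\PP(U_N\ne\emptyset)=0$, and intersecting over $N$ completes the proof.

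\emph{Main obstacle.} The delicate part is the final estimate. The range $n\lesssim\rho^{-1/2}$ alone contributes only about $2\log(1/\rho)$ to the exponent — exactly the critical amount, at which $\rho^{-2}e^{-2\log(1/\rho)}$ is merely constant — so one genuinely needs the extra contribution from the intermediate range $\rho^{-1/2}\lesssim n\lesssim\rho^{-2/3}$, where $E_n$ can still swallow $Q$ but with probability only a decaying fraction of $|H_n|$. Pinning down both this surplus and the comparison $\PP(Q\subseteq E_n)=(1+o(1))|H_n|$ on the main range forces one to decompose the hyperbolic cross $C_n$ into the dyadic rectangles above and to exploit its anisotropy — the fact that its area of order $1/n$ is carried by long thin arms rather than by a bulk of size $\asymp\epsilon_n$. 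This is the point at which the normalization $\epsilon_n\asymp 1/(n\log n)$ enters, the constant $1$ in the statement being comfortably within its reach.
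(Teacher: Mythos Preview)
Your overall plan is exactly the paper's: reduce to a random covering problem on the torus and run a Dvoretzky-style first-moment argument with a union bound over a fine grid. The gap is in the last step, and it is precisely the one you flag as the ``main obstacle'': by choosing $\epsilon_n=1/(n\log n)$ you land on the critical threshold, your main range $n\lesssim\rho^{-1/2}$ yields only $2\log(1/\rho)$ in the exponent, and you then \emph{assert} that the range $\rho^{-1/2}\lesssim n\lesssim\rho^{-2/3}$ supplies a surplus of order $\log(1/\rho)$ without actually computing it. That computation is not a formality---on that range $\PP(Q\subseteq E_n)$ is only a decaying fraction of $|H_n|$, and one has to track the erosion of the anisotropic cross carefully to see that the sum still produces $c\log(1/\rho)$ for some $c>0$. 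As written, the proof is incomplete at its decisive point.

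The paper sidesteps this entirely by introducing slack: it proves a general covering theorem (Theorem~\ref{thm.RandLittGenForm}) under the divergence hypothesis \eqref{eqn.divcond}, applies it with $\psi(n)=(1+\delta)/(n\log(n+1))$ for arbitrary $\delta>0$, and only then lets $\delta\downarrow 0$. With the slack one has $\lambda(A_m)\sim 4(1+\delta)/m$, so already the main range gives exponent $(2+2\delta)\log(1/\rho)$, which beats the $\rho^{-2}$ union-bound factor; no intermediate range is needed. The paper's other device is, instead of eroding by a square, to shrink $A_m$ to $B_m^{(n)}$ by replacing $\psi(m)$ with $\psi(m)-1/(n\log^2 n)$: then any point missed by $\bigcup_{m\le n}A_m$ has a ball of radius $1/(n^2\log^2 n)$ missing $\bigcup_{m\le n}B_m^{(n)}$, so a grid of about $n^{4+\epsilon}$ \emph{points} suffices, and $\lambda(B_m^{(n)})=\lambda(A_m)+O(1/(n\log n))$ uniformly in $m$, so the cumulative loss in the exponent is only $O(1/\log n)$. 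Your erosion-by-square loses roughly $(\text{perimeter of }C_m)\cdot m\rho\asymp m\rho$ per term, i.e.\ $O(1)$ over the main range; harmless once you have the $(1+\delta)$ slack, but fatal at criticality.

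In short, the obstacle you describe is self-inflicted: take $\epsilon_n=(1+\delta)/(n\log n)$, run your argument on the main range alone, and then let $\delta\to 0$. That is what the paper does.
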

Of course it follows immediately from this theorem that for almost all values of $(\gamma_n)_{n\in\N}$ and $(\delta_n)_{n\in\N}$,
\[\liminf_{n\rar\infty}n\|n\alpha-\gamma_n\|\|n\beta-\delta_n\|=0,\]
for all $\alpha,\beta,\in\R$. As mentioned in the abstract, there is a natural geometric interpretation of this result. For each $n$, the set of $\alpha,\beta\in\R$ which satisfy
\[n\|n\alpha\|\|n\beta\|\le\epsilon\]
is a union of hyperbolas centered at rational points with denominators equal to $n$. In order for the Littlewood Conjecture to be true, it must be the case that, for all $\epsilon>0$, the union of all such hyperbolas covers all of $\R^2$. Our randomized result in Theorem \ref{thm.RandLitt} allows a translate, depending on $n$, of the centers of each of the hyperbolas.\vspace*{.1in}

\section{Proof of Theorem \ref{thm.RandLitt}}
Theorem \ref{thm.RandLitt} will follow from the following more general result.
\begin{theorem}\label{thm.RandLittGenForm}
 Let $(\gamma_n)_{n\in\N}$ and $(\delta_n)_{n\in\N}$ be sequences of independent random variables taking values which are uniformly distributed in $[0,1)$. Let $\psi:\N\rar[0,\infty)$ be a decreasing function and suppose that, for some $\epsilon>0$,
\begin{equation}\label{eqn.divcond}
\limsup_{n\to \infty}~\frac {1}{n^{4+\epsilon}}\cdot\exp\left((4-\epsilon)\sum_{m=1}^{n}\psi(m)\log \frac{1}{\psi(m)}\right)=\infty.
\end{equation}
Then for almost all values of $(\gamma_n)_{n\in\N}$ and $(\delta_n)_{n\in\N}$, we have that, for all $\alpha,\beta\in\R$ there are infinitely many solutions to the inequality
\begin{equation}\label{eqn.DiophIneq}
\left|\alpha+\frac{\gamma_n}{n}-\frac{a}{n}\right|\cdot\left|\beta+\frac{\delta_n}{n}-\frac{b}{n}\right|\le \frac{\psi(n)}{n^2},
\end{equation}
with $n\in\N$ and $a,b\in\Z$.
\end{theorem}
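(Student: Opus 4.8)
The plan is to recast the statement as a Dvoretzky-type random covering problem on the torus $\mathbb{T}^2=\R^2/\Z^2$. Since the left side of \eqref{eqn.DiophIneq} is unchanged under $\alpha\mapsto\alpha+1$ and $\beta\mapsto\beta+1$ (absorb the shifts into $a$ and $b$), it suffices to treat $(\alpha,\beta)\in\mathbb{T}^2$; and taking the optimal integers in \eqref{eqn.DiophIneq} shows that $(\alpha,\beta)$ solves it for a given $n$ exactly when $(\alpha,\beta)$ lies in the random set
\[
H_n=\bigl\{(x,y)\in\mathbb{T}^2:\ \|nx+\gamma_n\|\cdot\|ny+\delta_n\|\le\psi(n)\bigr\}.
\]
So I must show $\limsup_n H_n=\mathbb{T}^2$ almost surely. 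Because each tail $(\gamma_n,\delta_n)_{n\ge N}$ has the original law and \eqref{eqn.divcond} is unaffected by replacing $\psi$ with $m\mapsto\psi(m+N-1)$, it is enough to prove $\bigcup_{n\ge1}H_n=\mathbb{T}^2$ almost surely under \eqref{eqn.divcond}; applying this to every tail and intersecting over $N\in\N$ then recovers $\limsup_n H_n=\mathbb{T}^2$ almost surely.

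For fixed $(x,y)$ the quantities $\|nx+\gamma_n\|$ and $\|ny+\delta_n\|$ are independent and uniform on $[0,1/2]$, so, writing $c_n:=\PP((x,y)\in H_n)$, one finds $c_n=4\psi(n)\bigl(1-\log(4\psi(n))\bigr)$. After dealing with the trivial cases $\psi(n)\ge1/4$ for some $n$ (then $H_n=\mathbb{T}^2$ from that index on, and we pass to the tail where $\psi<1/4$) and $\psi(n)\not\to0$ (then $c_n$ is bounded below by a positive constant and the estimates below only get easier), we may assume $0<\psi(n)<1/4$ and $\psi(n)\to0$; then $c_n\ge(4-\epsilon)\psi(n)\log\tfrac1{\psi(n)}$ for $n$ large, so
\[
\sum_{n\le N}c_n\ \ge\ (4-\epsilon)\sum_{m\le N}\psi(m)\log\tfrac1{\psi(m)}-O(1).
\]
The events $\{(x,y)\in H_n\}$ are independent in $n$, but being covered by some $H_n$ is not a robust event (the hyperbolic cross has a boundary), so a union bound over a net needs a robustified version. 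I would fix $\rho_k:=n_k^{-(2+\epsilon/4)}$, where $(n_k)$ is a subsequence along which the $\limsup$ in \eqref{eqn.divcond} equals $\infty$, and cover $\mathbb{T}^2$ by $\asymp\rho_k^{-2}$ squares of side $\rho_k$. The geometric point: writing $c_Q$ for the centre of such a square $Q$ and $\xi_n:=\|nx_Q+\gamma_n\|$, $\eta_n:=\|ny_Q+\delta_n\|$ (independent, uniform on $[0,1/2]$), the bound $n\rho_k<1/2$ lets one pass to local coordinates about the relevant centre of the cross and conclude that $(\xi_n+n\rho_k)(\eta_n+n\rho_k)\le\psi(n)$ forces $c_Q+[-\rho_k,\rho_k]^2\subseteq H_n$, hence $Q\subseteq H_n$.

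The crux is the perturbation estimate
\[
\PP\bigl((\xi_n+n\rho_k)(\eta_n+n\rho_k)\le\psi(n)\bigr)\ \ge\ c_n-C\,n\rho_k
\]
for an absolute constant $C$, proved by conditioning on $\xi_n$ and bounding the $\eta_n$-measure of the symmetric difference of $\{\xi\eta\le\psi(n)\}$ and $\{(\xi+n\rho_k)(\eta+n\rho_k)\le\psi(n)\}$ using the explicit shape of the hyperbolic cross. Granting it, independence over $n$ gives
\[
\PP\bigl(Q\not\subseteq H_n\text{ for all }n\le n_k\bigr)\ \le\ \exp\Bigl(-\sum_{n\le n_k}c_n+C\rho_k\sum_{n\le n_k}n\Bigr),
\]
where $C\rho_k\sum_{n\le n_k}n\asymp\rho_k n_k^2=n_k^{-\epsilon/4}\to0$. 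A union bound over the $\asymp\rho_k^{-2}$ cells, combined with the lower bound on $\sum_{n\le n_k}c_n$ and the choice of $(n_k)$, then yields
\[
\PP\bigl(\mathbb{T}^2\not\subseteq\textstyle\bigcup_{n\le n_k}H_n\bigr)\ \lesssim\ \rho_k^{-2}\exp\Bigl(-\sum_{n\le n_k}c_n\Bigr)\ =\ o\bigl(n_k^{4+\epsilon/2}\cdot n_k^{-(4+\epsilon)}\bigr)\ =\ o(1).
\]
Thus $\PP(A_k)\to1$ for $A_k:=\{\mathbb{T}^2\subseteq\bigcup_{n\le n_k}H_n\}$, and by reverse Fatou $\PP(\bigcup_{n\ge1}H_n=\mathbb{T}^2)\ge\PP(\limsup_k A_k)\ge\limsup_k\PP(A_k)=1$, completing the proof. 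The main obstacle is this perturbation bound together with the calibration of $\rho_k$: the squares must be small enough that robust covering of a centre forces covering of its whole cell and that the accumulated cost $\rho_k\sum_{n\le n_k}n$ vanishes, yet coarse enough that their number $\asymp\rho_k^{-2}\approx n_k^4$ is beaten by $\exp(\sum_{n\le n_k}c_n)\approx\exp\bigl(4\sum_{m\le n_k}\psi(m)\log\frac1{\psi(m)}\bigr)$ — and \eqref{eqn.divcond} is exactly the hypothesis that reconciles these two demands.
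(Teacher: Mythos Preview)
Your argument is correct and follows the same Dvoretzky-type covering strategy as the paper: recast the problem as random covering of the torus, robustify the covering event so that it is stable under small perturbations, discretize via a grid of mesh $\asymp n^{-2-\epsilon'}$, and combine independence with a union bound along a subsequence realizing \eqref{eqn.divcond}. The only notable difference is in the robustification---the paper shrinks $\psi(m)$ to $\psi(m)-1/(n\log^2 n)$ (which forces it first to extract a set $\Lambda$ along which $\psi(n)\gtrsim 1/(n\log n)$ so the shrunk hyperbolas stay nondegenerate), whereas you inflate $\xi_n,\eta_n$ by $n\rho_k$ and bound the probability loss by $O(n\rho_k)$ directly; your version sidesteps the $\Lambda$ lower-bound step, but the two routes are otherwise interchangeable.
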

\begin{proof}
For each $n\in\N$ let
\[u_n=n^{-4-\epsilon}\cdot\exp\left((4-\epsilon)\sum_{m=1}^n\psi(m)\log\psi(m)^{-1}\right),\]
and define
\[
\Lambda=\left\{n\in\N : u_n\ge \max_{m\le n}u_m\right\}.
\]
The set $\Lambda$ is infinite by hypothesis. Furthermore if $n\in \Lambda$ then, since $u_n\ge u_{n-1}$, we obtain
\begin{equation*}
\psi(n)\log\frac{1}{\psi(n)}~\ge~ \frac{4+\epsilon}{4-\epsilon}\cdot\log\frac{n}{n-1}~=~\frac{4+\epsilon}{4-\epsilon}\cdot\frac{1}{n}+O\left(\frac{1}{n^2}\right).
\end{equation*}
Since the function $x\mapsto x\log x^{-1}$ is increasing on the interval $(0,e^{-1})$, it follows from the above equation that
\begin{equation}\label{eqfast}
\psi(n)\ge \frac{1+\epsilon/4}{n\log n},
\end{equation}
for all sufficiently large $n\in\Lambda$.\vspace*{.1in}

The problem we are considering is periodic modulo $1$ in both $\alpha$ and $\beta$. Therefore if we set
\[A_n=\{(\alpha,\beta)\in [0,1)^2:\text{\eqref{eqn.DiophIneq} holds for some } a,b\in \Z\},\]
what we are trying to prove is that
\[\limsup_{n\rar\infty}A_n=[0,1)^2.\]
If it happens that $A_n=[0,1)^2$ for infinitely many $n$ (equivalently, that $\psi(n)\ge 1/4$ for infinitely many $n$) then there is nothing to show. It follows that we can, without loss of generality, ignore all values of $n$ for which this happens. For the remaining $n$, the Lebesgue measure of $A_n$ is given by
\begin{align}
\lambda (A_n)&= n^2\left(\frac{4\psi(n)}{n^2} + 8\int _{\frac{\sqrt {\psi(n)}}{n}}^ {1/2n}\frac{\psi(n)}{n^2 \alpha}\,d\alpha\right)\nonumber\\
&=4\psi(n)\log\psi(n)^{-1}-4(\log 4-1)\psi(n).\label{eqn.measure}
\end{align}
Again, a basic computation shows that the function $x\mapsto 4x(\log x^{-1}-\log 4+1)$ is increasing on the interval $(0,1/4)$. Therefore it follows from \eqref{eqfast} that there exists an $n_0\in\N$ such that
\[\lambda (A_n)\ge\frac{4}{n},\]
for all $n\in\Lambda$ satisfying $n\ge n_0$. We suppose without loss of generality that $n_0$ is also chosen so that \eqref{eqfast} holds for all $n\in\Lambda$ with $n\ge n_0$.\vspace*{.1in}

Now suppose that $n\in\Lambda$, with $n\ge n_0$, and for each $m\le n$ define $B_m^{(n)}\subseteq A_m$ by
\[
B_m^{(n)}=\bigcup_{a,b=1}^m \left\{(\alpha,\beta)\in [0,1)^2: \left|\alpha+\frac{\gamma_m}{m}-\frac{a}{m}\right|\cdot\left|\beta+\frac{\delta_m}{m}-\frac{b}{m}\right|\le \frac{\psi(m)-1/(n\log^2 n)}{m^2}\right\}.
\]
From \eqref{eqn.measure} we have that
\begin{equation*}
\lambda(B_m^{(n)})=\lambda(A_m)+O\left(\frac{1}{n\log n}\right).
\end{equation*}
Furthermore, if there is a point $x\in[0,1)^2$ which does not lie in $\bigcup_{m=1}^nA_m$, then we have that
\[B\left(x,1/(n^2\log^2n)\right)\cap\bigcup_{m=1}^nB_m^{(n)}=\emptyset.\]
Although this is not immediately obvious, it follows easily from computing the global minimum of the distance from a point on the boundary of $A_m$, to the boundary of $B_m^{(n)}$. The global minimum occurs along the `tails' of the hyperbolic regions which form the boundaries of these sets.\vspace*{.1in}

What we have shown so far implies that, if there is a point $x\notin \bigcup_{m=1}^ n A_m$, then, as long as $n$ is large enough (depending on $\epsilon$), there exist integers $1\le a,b< \lfloor n^{2+\epsilon/2}\rfloor$ satisfying
\[\left(\frac{a}{\lfloor n^{2+\epsilon/2}\rfloor},\frac{b}{\lfloor n^{2+\epsilon/2}\rfloor}\right)\notin\bigcup_{m=1}^n B_m^{(n)}.\]
Using our independence assumption, the measure of the set of $(\gamma_m,\delta_m)$ pairs for which such integers $a$ and $b$ exist is bounded above by
\begin{align}
&n^{4+\epsilon}\cdot\lambda\left(\bigcap_{m=1}^n([0,1)^2\setminus B_m^{(n)})\right)\nonumber\\
&\qquad =n^{4+\epsilon}\prod _{m=1}^n(1-\lambda(B_m^{(n)}))\nonumber\\
&\qquad = n^{4+\epsilon}\prod_{m=1}^n \left(1-\lambda(A_m)+O\left(\frac {1}{n\log n}\right)\right).\label{eqn.measurebound}
\end{align}
We assume at this point that $\lambda(A_m)\rar 0$ as $m\rar\infty$ (if this is not the case, the remainder of the argument is very easy). Under this assumption we have, for $n\in\Lambda$ sufficiently large, that the expression on the right hand side of \eqref{eqn.measurebound} is bounded above by a constant times
\begin{align*}
n^{4+\epsilon}\cdot\exp\left(-\sum_{m=1}^n\lambda(A_m)\right).
\end{align*}
This in turn is bounded above, again for $n$ sufficiently large, by a constant times
\begin{align*}
n^{4+\epsilon}\cdot\exp\left(-(4-\epsilon)\sum_{m=1}^n\psi(m)\log\psi(m)^{-1}\right),
\end{align*}
and, by hypothesis, this expression tends to $0$ as $n\rar\infty$. This shows that, for almost every choice of $(\gamma_n)_{n\in\N}$ and $(\delta_n)_{n\in\N}$, we have that
\[\bigcup_{m=1}^\infty A_m=[0,1)^2.\]
The same argument also shows that, for any $M\in\N$, we have almost surely that
\[\bigcup_{m=M}^\infty A_m=[0,1)^2,\]
and this completes the proof.\vspace*{.1in}
\end{proof}
To verify the statement of Theorem \ref{thm.RandLitt}, let $\delta>0$ and take
\[\psi(n)=\frac{1+\delta}{n\log(n+1)}.\]
It is not difficult to check that we can then choose $\epsilon>0$ so that \eqref{eqn.divcond} is satisfied, and we conclude from Theorem \ref{thm.RandLittGenForm} that, for almost every choice of $(\gamma_n)_{n\in\N}$ and $(\delta_n)_{n\in\N}$,
\[\liminf_{n\rar\infty}n(\log n)\|n\alpha-\gamma_n\|\|n\beta-\delta_n\|\le 1+\delta.\]
Since $\delta>0$ is arbitrary, the claim of Theorem \ref{thm.RandLitt} follows.

\vspace{.15in}

{\footnotesize
\noindent
AH: Department of Mathematics, University of Houston,\\
Houston, TX, United States.\\
haynes@math.uh.edu\\

\noindent
HK: Faculty of Mathematics, University of Vienna,\\
Oskar Morgensternplatz 1, 1090 Vienna, Austria.\\
henna.koivusalo@univie.ac.at
}

\end{document}